\newtheorem{theorem}{Theorem}[section]
\newtheorem{lemma}[theorem]{Lemma}
\newtheorem{assumption}[theorem]{Assumption}
\theoremstyle{definition}
\theoremstyle{remark}
\newtheorem{remark}[theorem]{Remark}
\newcommand{\RNum}[1]{\uppercase\expandafter{\romannumeral #1\relax}}
\newcommand{\ZZ}{\mathbb{Z}}%Integers
\newcommand{\NN}{\mathbb{N}}%Natural numbers
\newcommand{\QQ}{\mathbb{Q}}%Rationals
\newcommand{\FF}{\mathbb{F}}%Finite field
\newcommand{\diag}{\textup{diag}}
\newcommand{\Hom}{\textup{Hom}}
\newcommand{\End}{\textup{End}}
\newcommand{\rk}{\textup{rk}}
\newcommand{\GL}{\textup{GL}}
\newcommand{\SL}{\textup{SL}}
\newcommand\nc\newcommand
\title{On conjugacy of diagonalizable integral matrices}
\author{Gabriele Nebe
\thanks{Lehrstuhl D f\"ur Mathematik, RWTH Aachen University, nebe@math.rwth-aachen.de} }
\begin{document}

\maketitle

{\small 
{\sc Abstract.} 
It is shown that under some additional assumption 
two diagonalizable integral matrices $X$ and $Y$ 
with only rational eigenvalues are conjugate in $\GL_n(\ZZ )$ if
and only if they are conjugate over all localizations. 
This is used to prove that for a prime $p \equiv 3\pmod{4}$ 
the adjacency matrices of the 
Paley graph and the Peisert graph on $p^2$ vertices are conjugate in
$\GL_{p^2}(\ZZ )$, answering a question by Peter Sin \cite{Sin}.
}

\section{Introduction}

Let $X$, $Y$ $\in \ZZ^{n\times n} $  be two integral matrices. 
Then $C(X) := \{ A \in \ZZ ^{n\times n } \mid AX = XA \} $ is a
$\ZZ $-order and $C(X,Y) := \{ A\in \ZZ^{n\times n} \mid 
AX = Y A \} $ is a right module for $C(X)$. 
Faddeev  \cite{Fadeev} shows that $X$ and $Y$ are conjugate in 
$\GL_n(\ZZ )$ if and only if $C(X,Y)$ is a free $C(X)$-module. 

Local-global properties for similarity of matrices have been considered 
for lattices over orders in \cite{Jacobinski} and later   
in \cite{Guralnick}.
Using the above mentioned result by Faddeev both papers, 
\cite[Satz 7]{Jacobinski} 
and \cite[Theorem 7]{Guralnick}, show that 
two matrices over the
ring of integers in an algebraic number field  are conjugate over all 
localizations if and only if they are conjugate over the ring of
integers in some finite field extension. 
In certain cases, there is no need to pass to an extension field. 
This paper gives an 
additional sufficient condition (see Assumption \ref{ass}) for which a thorough analysis 
of  \cite{Jacobinski} allows to prove 
 Theorem \ref{main} saying that,
 two diagonalizable integral matrices satisfying Assumption \ref{ass} 
 %and having only rational eigenvalues
 are conjugate in $\GL_n(\ZZ )$ if
and only if they are conjugate over all localizations.

The work on this paper started during the  Hausdorff Trimester program 
``Logic and Algorithmic group theory''. 
I thank the HIM for their support during this program and 
Eamonn O'Brien for communicating 
a question by Peter Sin which was the main motivation for this  note.
The recent paper \cite{Sin} shows that for any prime $p\equiv 3\pmod{4}$ 
the adjacency matrices of the Paley graph $A(p^2)$ and the Peisert graph  $A^*(p^2)$
on $p^2$ vertices
are conjugate over all localizations of $\ZZ $ and asks whether 
these are also conjugate in $\GL_{p^2}(\ZZ )$. 
As these adjacency matrices are rationally diagonalizable 
and satisfy Assumption \ref{ass} (see Section \ref{Paley}) 
Theorem \ref{main} implies a positive answer to this question. 

The paper contributes
to the  SFB TRR 195 ``Symbolic Tools in Mathematics and their Application''.

\section{Notation and statement of main result}\label{notation}

We denote by $\ZZ $ the ring of integers in the rationals $\QQ $. 
For a prime $p$  let
$$\ZZ_{(p)} := \{ \frac{a}{b} \in \QQ \mid  p \mbox{ does not divide } b \} $$
 denote the localization of $\ZZ $ at $p$. 
For $n\in \NN $ let 
$$\GL_n(\ZZ ):= \{ g\in \ZZ ^{n\times n} \mid \det(g) \in \{ \pm 1 \} \} $$ 
be 
the group of invertible integral matrices of size $n$ and 
$$\GL_n(\ZZ _{(p)} ):= \{ g\in \ZZ _{(p)} ^{n\times n} \mid p \mbox{ does not divide } \det(g)  \} $$ 
the group of invertible matrices  over $\ZZ _{(p)}$.

 Let $A\in \ZZ ^{n\times n}$. Then there are matrices $g,h\in \GL_n(\ZZ )$ such that 
 $$gAh = \diag (d_1,\ldots , d_r,0,\ldots , 0 ),\mbox{  with } d_i\in \NN , \ d_1\mid d_2 \mid \ldots \mid d_r .$$
 Then the {\em abelian invariants}  $(d_1,\ldots , d_r)$ of $A$ are uniquely determined by $A$ and 
 the {\em Smith group} of $A$ is the torsion part of the cokernel of the endomorphism $A$; as an abelian group 
 this is isomorphic to $\ZZ / d_1\ZZ \times \ldots \times \ZZ /d_r\ZZ $. Its exponent is $d_r$.

In  this note we consider integral diagonalizable matrices 
 $X,Y \in \ZZ^{n\times n}$ with the same  minimal polynomial
 $\mu_X = \mu _Y = \prod _{i=1}^k (t-a_i) \in \ZZ[t] $ where 
 $a_1,\ldots , a_k \in \ZZ $ are pairwise distinct integers. 
 Then by Chinese Remainder Theorem  the $\QQ $-algebras 
 $\QQ [X]  $ and also $\QQ [Y ]$ are isomorphic to a direct sum of copies of $\QQ $ 
 $$ \QQ [X] \cong \bigoplus _{i=1}^k \QQ [t]/(t-a_i) \cong \bigoplus _{i=1}^k \QQ .$$
 Let $e_i \in \QQ [X] \subseteq \QQ ^{n\times n}$ denote the primitive idempotents of this algebra ($1\leq i \leq k$). 
 Then there are  minimal $q_i\in \NN $ such that 
 $E_i:=q_ie_i \in \ZZ ^{n\times n} $ for all $i$. 
 For our proof of the main result we make the following assumption on the Smith group of $E_i$:

	 \begin{assumption}\label{ass}
		 Assume that one of the following two statements holds:
		 \begin{itemize}
			 \item[(a)] 
For all $1\leq i \leq k$ the Smith group of $E_i$ has exponent $q_i$.
\item[(b)] 
	$\rk (e_1) =1 $ and for all $2\leq i \leq k $ the Smith group of $E_i$ has exponent $q_i$.
	\end{itemize}
	 \end{assumption}

Though the formulation of part (b) of the assumption does not seem to be 
natural, 
 this is the situation that will occur quite frequently
in graph theory. It is the one that we need in Section \ref{Paley}.

\begin{theorem} \label{main} 
	Let $X,Y \in \ZZ^{n\times n}$ be two matrices with minimal polynomial
	$\mu_X = \mu _Y = \prod _{i=1}^k (t-a_i) \in \ZZ[t] $ where 
	 $a_1,\ldots , a_k \in \ZZ $ are pairwise distinct integers.
	 Assume that $X$ satisfies Assumption \ref{ass}.
	Then there is some $T\in \GL_n(\ZZ) $ with $TXT^{-1} = Y $
	if and only if for all primes $p$ there are matrices
	$T_p \in \GL _n(\ZZ _{(p)} ) $ with $T_pXT_p^{-1} = Y$. 
\end{theorem}

Note that we could prove Theorem \ref{main} under weaker hypotheses, 
for instance for minimal polynomials $\mu_X=\mu_Y = \prod _{i=1}^k f_i$ 
where all the pairwise distinct irreducible factors $f_i$ have 
equation orders $\ZZ [t]/(f_i(t))$ that are principal ideal domains. 
Such an assumption on the equation orders is necessary as the following example shows:
Put 
$$Y:= \begin{pmatrix} 0 & 1 \\ \mbox{-}6 & 0 \end{pmatrix} ,\ 
X:= \begin{pmatrix} 0 & 2 \\ \mbox{-}3 & 0 \end{pmatrix} ,\ 
T_2:=\begin{pmatrix} 0 & \mbox{-}1 \\ 3 & 0 \end{pmatrix} ,\ 
T_p:= \diag (1,2)  \mbox{ for } p > 2 .$$
Then
$\mu _X = \mu _Y = t^2+6 $ is irreducible but the equation order 
$\ZZ [t]/(t^2+6) \cong \ZZ [\sqrt{-6}]$ has class number 2. 
It is easy to see that $X$ and $Y$ are not conjugate in $\GL_2(\ZZ )$
but  for all primes $p$ the matrix  $T_p \in \GL_2(\ZZ _{(p)}) $ 
satisfies
$T_p X T_p^{-1} = Y$, so $X$ and $Y$ are conjugate over all localizations.

Also Assumption \ref{ass} cannot be completely omitted, 
as can be seen by taking 
$$Y:= \begin{pmatrix} 1 & 1 \\ 0 & 6 \end{pmatrix} ,\ 
X:= \begin{pmatrix} 1 & 2 \\ 0 & 6 \end{pmatrix} ,\ 
T_2:=\begin{pmatrix} \mbox{-}1 & 1 \\ 0 & 3 \end{pmatrix} ,\ 
T_p:= \diag (1,2)  \mbox{ for } p > 2 .$$
Here $\mu_X=\mu_Y = (t-1)(t-6)$ and 
$T_p X T_p^{-1} = Y$ 
for all primes $p$ but $X$ and $Y$ are not conjugate over $\GL_2(\ZZ )$.
Note that neither $X$ nor $Y$ satisfies Assumption \ref{ass} as both matrices
$$E_1= 5 e_1 = X-1 , E_2=5e_2 = 6-X $$  have trivial Smith group.

\section{Proof of Theorem \ref{main} based on \cite{Jacobinski}} \label{JProof}

For a ring $O$ we put 
$\SL_n(O ) := \{ g\in O^{n\times n} \mid \det(g) = 1 \}$. 

\begin{lemma}\label{b} (see \cite[Theorem K.14]{Jantzen})
	%for an explicit and constructive proof of a much more general statement)
        Let $q\in \ZZ $ be such that $q\geq 2$. \\
        Then the entry-wise reduction map
	        $\SL_n(\ZZ ) \to \SL _n(\ZZ/q\ZZ )$ is onto.
	\end{lemma}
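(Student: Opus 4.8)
The plan is to prove surjectivity by reducing both groups to their elementary transvections. For $i\neq j$ and a scalar $\lambda$ let $B_{ij}(\lambda)$ denote the elementary matrix that agrees with the identity $I_n$ except for the entry $\lambda$ in position $(i,j)$. Over the Euclidean ring $\ZZ$ the usual row-and-column reduction shows that $\SL_n(\ZZ)$ is generated by the transvections $B_{ij}(\lambda)$ with $\lambda\in\ZZ$. The entry-wise reduction map sends $B_{ij}(\lambda)$ to $B_{ij}(\lambda\bmod q)$, and conversely every transvection $B_{ij}(\bar\mu)\in\SL_n(\ZZ/q\ZZ)$ is the image of $B_{ij}(\mu)$ for any integer $\mu$ reducing to $\bar\mu$. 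Hence the image of the reduction map contains all transvections of $\SL_n(\ZZ/q\ZZ)$, and the lemma follows once I show that these transvections generate $\SL_n(\ZZ/q\ZZ)$.

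So it remains to prove that $\SL_n(\ZZ/q\ZZ)$ is generated by transvections. The case $n=1$ is trivial, since $\SL_1$ is the trivial group, so assume $n\geq 2$. By the Chinese Remainder Theorem $\ZZ/q\ZZ$ is a direct product of the finite local rings $R_i=\ZZ/p_i^{e_i}\ZZ$ coming from the prime factorization of $q$, and this decomposition is compatible with both $\SL_n$ and with taking transvections (a transvection over the product corresponds to a pair of transvections over the factors). Thus it suffices to treat a single local ring $R$ with maximal ideal $\mathfrak{m}$. On a matrix $A\in\SL_n(R)$ I would run Gaussian elimination using only transvections: expanding $\det A$ along the first column shows that, as $\det A$ is a unit, not every entry of that column lies in $\mathfrak{m}$, so some entry is a unit of $R$. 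Row transvections then place a unit $u$ in position $(1,1)$ and clear the rest of the first column, and column transvections clear the rest of the first row, producing
$$
\begin{pmatrix} u & 0 \\ 0 & A' \end{pmatrix}, \qquad A'\in\GL_{n-1}(R).
$$
Iterating reduces $A$, up to left and right multiplication by transvections, to a diagonal matrix $\diag(u_1,\ldots,u_n)$ whose entries are units with product $1$.

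Finally, such a diagonal matrix is itself a product of transvections. A two-coordinate factor of the form $\diag(u,u^{-1})$ is a product of transvections because of the identity
$$
B_{12}(u)\,B_{21}(-u^{-1})\,B_{12}(u)=\begin{pmatrix} 0 & u \\ -u^{-1} & 0 \end{pmatrix},
$$
and multiplying the instance with parameter $u$ by the inverse of the instance with parameter $1$ yields $\diag(u,u^{-1})$; a diagonal matrix of determinant $1$ is then a product of such two-coordinate factors. This establishes the generation statement and with it the lemma. The one genuine obstacle is that $\ZZ/q\ZZ$ need not be a domain, so the naive Euclidean reduction used over $\ZZ$ is unavailable; the Chinese Remainder Theorem localizes the problem to the rings $\ZZ/p^e\ZZ$, where the dichotomy ``unit versus element of $\mathfrak{m}$'' restores just enough of the Euclidean algorithm to carry the elimination through.
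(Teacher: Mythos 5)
Your proof is correct, and it is worth noting that the paper offers no argument at all for this lemma: it is quoted with the citation \cite[Theorem K.14]{Jantzen}, so there is nothing in the paper to compare step by step, and what your argument buys is a self-contained replacement for that reference. The route you take (Chinese Remainder reduction to the local rings $\ZZ/p^e\ZZ$, Gaussian elimination by transvections using the unit/non-unit dichotomy, and the Whitehead identity expressing $\diag(u,u^{-1})$ as a product of transvections) is the standard proof of this classical fact, and every step checks out: since $\det A$ is a unit, expansion along the first column forces a unit entry in that column; since $R$ is local, a non-unit plus a unit is a unit, so a single row transvection places a unit in position $(1,1)$ (transvections cannot swap rows, so this is precisely where locality enters --- worth one explicit clause); your $2\times 2$ identity $B_{12}(u)B_{21}(-u^{-1})B_{12}(u)=\left(\begin{smallmatrix}0 & u\\ -u^{-1} & 0\end{smallmatrix}\right)$ is right; and the final diagonal matrix $\diag(u_1,\ldots,u_n)$ with $u_1\cdots u_n=1$ indeed telescopes into two-coordinate blocks $\diag(u_1\cdots u_j,\,(u_1\cdots u_j)^{-1})$. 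Two minor remarks: your opening claim that $\SL_n(\ZZ)$ is generated by transvections is never used --- all you need is that each transvection of $\SL_n(\ZZ/q\ZZ)$ lifts to a transvection in $\SL_n(\ZZ)$ (immediate, since $B_{ij}(\mu)$ has determinant $1$) together with the fact that the image of the reduction map is a subgroup; and in the CRT step it would be cleaner to say that a tuple which is a transvection in one factor and the identity in the others is itself a transvection over the product ring (with parameter supported in that factor), which is the precise statement making generation pass from the factors to the product.
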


It is clear that Lemma \ref{b}
cannot be true for $\GL_n$, as the determinant
of the reduction modulo $q$ of a matrix in $\GL_n(\ZZ )$ is $\pm 1$ mod $q$.

One direction of Theorem \ref{main} is obvious:
If there is a matrix $T\in \GL_n(\ZZ )$  with $TXT^{-1} =Y$ then 
we may put $T_p:=T  \in\GL_n(\ZZ _{(p)})$ for all primes $p$ to see that the
two matrices are also conjugate over all localizations. 

To see the opposite direction
we use \cite[Satz 4]{Jacobinski}. 
I thank Peter Sin for simplifying my original approach.

The ring 
$$R := \ZZ [t] /\prod _{i=1}^k (t-a_i ) $$  is a $\ZZ $-order in 
the commutative split semisimple $\QQ $-algebra 
$${\mathcal A} := \QQ [t] /\prod _{i=1}^k (t-a_i ) \cong \bigoplus _{i=1}^k \QQ .$$ 
Let $e_1,\ldots , e_k \in {\mathcal A}$ denote the primitive idempotents. 
Then the unique maximal order ${\mathcal O}$ in ${\mathcal A}$ 
is $$ {\mathcal O  } = \bigoplus _{i=1}^k Re_i \cong \bigoplus _{i=1}^k \ZZ .$$
The
 two matrices $X$ and $Y$ in $\ZZ ^{n\times n}$ 
with minimal polynomial 
$\mu _X = \mu_Y = \prod _{i=1}^k (t-a_i) $ define two $R$-module 
structures $M_X$ and $M_Y$ on $\ZZ ^{1\times n}$ by letting $t$ act as right multiplication by
$X$ respectively $Y$. 

\begin{remark}\label{isoconj}
	$C(X) = \{ A\in \ZZ ^{n\times n } \mid AX = XA \} \cong \End_R(M_X)$
	and 
	$C(X,Y) \cong \Hom _R (M_Y,M_X) $. 
	In particular 
any isomorphism between the two $R$-modules  $M_X$ and $M_Y$ 
is given by a matrix $T\in \GL_n(\ZZ )$  conjugating $X$ to $Y$. 
\end{remark}

Applying Remark \ref{isoconj} to the localizations of $M_X$ and $M_Y$, the 
 matrices $T_p \in \GL_n(\ZZ _{(p)}) $ conjugating $X$ to $Y$ yield isomorphisms 
between these localizations  for all primes $p$. 
So $M_X$ and $M_Y$ are in the same genus of $R$-lattices. 

The ${\mathcal O}$-module 
$$\Gamma := M_X {\mathcal O}   = \bigoplus _{i=1}^k M_X e_i =: \bigoplus _{i=1}^k \Gamma _i $$
 has endomorphism ring 
$$\Delta := \End_{{\mathcal O}} (\Gamma ) \cong \bigoplus _{i=1}^k 
\ZZ ^{n_i\times n_i } $$ where $n_i = \dim (\Gamma_i ) $.
In particular the genus of the $\Delta $-lattice $\Gamma $ consists of 
a single class, and hence by \cite[Satz 3]{Jacobinski} the genus
of the $R$-lattice $M_X$ consists of a single narrow genus.  

Put $\Lambda _i := M_X\cap \Gamma _i$. Then
$$ \Gamma = \bigoplus _{i=1}^k \Gamma _i \supseteq M_X \supseteq \bigoplus _{i=1}^k \Lambda _i $$
	and $X$ acts on $\Gamma _i$ and on $\Lambda _i$ as a scalar matrix, the multiplication by $a_i$. 
	Recall that we choose $q_i \in \NN $ to be minimal such that $E_i = q_i e_i \in \End_{\ZZ } (M_X) = \ZZ ^{n\times n}$.  
	\begin{remark} \label{elt}
		If $(d_1,\ldots , d_{n_i})$ are the abelian invariants of $E_i$  and $m_j:=q_i/d_j $ for $i=1,\ldots ,n_i$, then
		$$\Gamma _i/\Lambda _i \cong \ZZ/m_{1} \ZZ \times \ldots \times  \ZZ /m_{n_i} \ZZ .$$
	\end{remark}

	To agree with the notation in \cite{Jacobinski} we put 
	$H:=C(X) = \End _{R } (M_X)\subseteq \Delta $. Then 
	$\Delta $ is a maximal order containing $H$ and 
	 the maximal two-sided $\Delta $-ideal contained in $H$ is 
	$$ {\mathcal F} := \bigoplus _{i=1}^k E_i \Delta  = \bigoplus _{i=1}^k q_i \ZZ ^{n_i\times n_i} \subseteq C(X) \subseteq \Delta.$$
	Moreover $M_X {\mathcal F} = \Gamma {\mathcal F} = \bigoplus _{i=1}^k 
	q_i \Gamma _i $. 
In the notation preceding \cite[Satz 4]{Jacobinski} we put 
$$%\begin{array}{l} 
%	H:=\End_{R} (M_X) = C(X) , \\ \Delta := \End_{\mathcal O} (_\Gamma) = \bigoplus _{i=1}^k 
%	\ZZ ^{n_i\times n_i }
% \\
\tilde{\Delta } : = \Delta / {\mathcal F} \mbox{ and }  \tilde{H} := C(X) / {\mathcal F} . % \end{array} 
$$
Then $\tilde{H} \leq \tilde{\Delta } $.
The respective groups of units are 
$$ \begin{array}{l}  U(\Delta ) = \prod _{i=1}^k \GL _{n_i} (\ZZ ) = \prod_{i=1}^k \GL (\Gamma _i), \\
	U(\tilde{\Delta }) = \prod _{i=1}^k \GL _{n_i} (\ZZ / q_i \ZZ ) = \GL(\Gamma / {\mathcal F}\Gamma ) ,\mbox{ and } \\ 
	U(\tilde{H}) = U(C(X)/{\mathcal F}) = \{ g\in U(\tilde{\Delta }) \mid (M_X/M_X {\mathcal F} )  g = M_X / M_X {\mathcal F}  \} .  
\end{array} $$
We also put $\widetilde{U(\Delta )} := U(\Delta )/{\mathcal F} \leq U(\tilde{\Delta })$ to denote the 
reduction of the units of $\Delta $ modulo ${\mathcal F}$.

Then \cite[Satz 4]{Jacobinski} tells us that 
the isomorphism classes of $C(X)$-lattices in the (narrow)
genus of $M_X$ correspond bijectively to the double cosets 
$$U(\tilde{H}) \slash U(\tilde{\Delta } ) \backslash \widetilde{U(\Delta)} .$$
So to prove Theorem \ref{main} we need to show that this set consists of only one element.

\begin{lemma} 
	In the situation of Theorem \ref{main} we have $|U(\tilde{H}) \slash U(\tilde{\Delta } ) \backslash \widetilde{U(\Delta)}| = 1 .$
\end{lemma}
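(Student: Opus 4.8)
The plan is to translate the statement into a surjectivity assertion about determinants and then to exhibit explicit units of $\tilde{H}$ realizing prescribed determinants block by block. First I would record, using Lemma \ref{b}, that $\widetilde{U(\Delta)}$ is exactly the set of tuples in $U(\tilde{\Delta}) = \prod_{i=1}^k \GL_{n_i}(\ZZ/q_i\ZZ)$ whose blockwise determinants lie in $\{\pm 1\}$: the reduction of $\GL_{n_i}(\ZZ)$ has determinant $\pm 1$, and conversely every matrix in $\SL_{n_i}(\ZZ/q_i\ZZ)$ lifts by Lemma \ref{b}, while $\diag(-1,1,\dots,1)$ lifts from $\GL_{n_i}(\ZZ)$. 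Hence the blockwise determinant induces an isomorphism $U(\tilde{\Delta})/\widetilde{U(\Delta)} \cong \prod_{i=1}^k (\ZZ/q_i\ZZ)^\times /\{\pm 1\}$, and since $\widetilde{U(\Delta)}$ is the kernel of this map, the double coset space is a single point if and only if the composite $U(\tilde{H}) \to \prod_{i=1}^k (\ZZ/q_i\ZZ)^\times/\{\pm1\}$ is onto. So it suffices to produce, for each $i$, a unit of $\tilde{H}$ with arbitrary determinant in block $i$.

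To build such units I would first make the geometry of $\bar{M}_X := M_X/M_X{\mathcal F}$ explicit. Since $q_i\Gamma_i \subseteq \Lambda_i$ and $M_X{\mathcal F} = \Gamma {\mathcal F} = \bigoplus_i q_i\Gamma_i$, the module $\bar{M}_X$ sits inside $\Gamma/\Gamma{\mathcal F} = \bigoplus_i \Gamma_i/q_i\Gamma_i$ as the preimage of the glue subgroup $G := M_X/\bigoplus_i \Lambda_i \subseteq \bigoplus_i \Gamma_i/\Lambda_i$ under the natural projection. Consequently any $g \in U(\tilde{\Delta})$ that acts trivially on $\bigoplus_i \Gamma_i/\Lambda_i$ automatically preserves $\bar{M}_X$ and hence lies in $U(\tilde{H})$. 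Now I use Assumption \ref{ass}(a) through Remark \ref{elt}: the exponent of the Smith group of $E_i$ being $q_i$ means the largest abelian invariant is $d_{n_i}=q_i$, so $m_{n_i}=q_i/d_{n_i}=1$, and in the corresponding basis of $\Gamma_i$ there is a vector $v \in \Lambda_i$ spanning a free direction. Scaling $\bar{v}$ by an arbitrary unit $c_i \in (\ZZ/q_i\ZZ)^\times$ and fixing the remaining basis vectors gives $g_i \in \GL_{n_i}(\ZZ/q_i\ZZ)$ which preserves $\Lambda_i/q_i\Gamma_i$ and induces the identity on $\Gamma_i/\Lambda_i$; extending by the identity in the other blocks produces an element of $U(\tilde{H})$ with blockwise determinant $c_i$ in block $i$ and $1$ elsewhere. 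This settles case (a).

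For case (b) the blocks $i\geq 2$ are handled exactly as above, so only the first block, where $n_1=1$ and the exponent hypothesis is dropped, needs a different argument. Here I would use a central unit: choosing an integer $\tilde{c}_1 \equiv c_1 \pmod{q_1}$ that is additionally coprime to $\prod_{j\geq 2} q_j$ — possible by the Chinese Remainder Theorem because $c_1$ is already a unit modulo $q_1$ — the constant $\tilde{c}_1 \in \ZZ \subseteq R \subseteq C(X)$ reduces to a central unit of $\tilde{H}$ whose first component is $c_1$. Composing with the block generators from case (a) for $i \geq 2$ adjusts the remaining components freely, so the image of $U(\tilde{H})$ in $\prod_i (\ZZ/q_i\ZZ)^\times/\{\pm1\}$ is everything. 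The main obstacle is the middle step: converting the exponent hypothesis of Assumption \ref{ass} into the existence of a free scaling direction via Remark \ref{elt}, and verifying that scaling along it genuinely stabilises $\bar{M}_X$ — that is, that it respects the gluing $G$ between the blocks rather than merely preserving each $\Lambda_i$. Everything else is bookkeeping with determinants and the Chinese Remainder Theorem.
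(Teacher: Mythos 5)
Your proposal is correct and follows essentially the same route as the paper: Lemma \ref{b} reduces the problem to realizing arbitrary blockwise determinants by elements of $U(\tilde{H})$, and Assumption \ref{ass} supplies the free basis vector $b^{(i)}_{n_i}\in\Lambda_i$ (i.e.\ $m_{n_i}=1$) whose scaling by a unit produces such elements, with the central-scalar correction handling case (b). The only differences are presentational: you work modulo ${\mathcal F}$ and phrase the conclusion as surjectivity of the determinant onto $\prod_i(\ZZ/q_i\ZZ)^\times/\{\pm 1\}$, whereas the paper constructs the corresponding integral endomorphisms $B\in C(X)$ via the splitting $M_X=K\oplus K'$ and trivializes each double coset directly.
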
 

\begin{proof}
	Clearly $C(X) = H \leq \Delta $, so we may write any element $B$ of $H$ as a tuple 
	$(B_1,\ldots ,B_k)$ of matrices $B_i \in \ZZ ^{n_i \times n_i} = \End _{\ZZ }(\Gamma _i ) $ which will be our 
	canonical notation for the elements of $\Delta = \oplus_{i=1}^k \End _{\ZZ }(\Gamma _i)$. 
	In particular 
	$$ H = \{ B:=(B_1,\ldots ,B_k) \in \Delta \mid  M_X B  \subseteq M_X \} .$$
	Let $\tilde{A}:=(\tilde{A}_1,\ldots , \tilde{A}_k) \in U(\tilde{\Delta }) $ and choose a preimage $A=(A_1,\ldots , A_k) \in \Delta $, so
	$A_i \in \ZZ ^{n_i\times n_i} $ reducing modulo $q_i$ to $\tilde{A}_i$. 
	Then $d_i:=\det(A_i ) \in \ZZ $ maps onto a unit $\det (\tilde{A}_i) \in \ZZ/q_i\ZZ $. Let $d_i'\in \ZZ $ with
	$d_i d_i' \equiv 1 \pmod{q_i} $ be the corresponding inverse.

	We construct  $B =(B_1,\ldots , B_k) \in H$  such that 
$\det(B _i ) \equiv d'_i  \pmod{q_i} $ for all $i$.

	Assume that part (a) of Assumption \ref{ass} holds. 
	If $m_1,\ldots , m_{n_i}$ are as in Remark \ref{elt} 
	there is a basis $(b^{(i)}_1,\ldots , b^{(i)}_{n_i})$ of $\Gamma _i$ such that 
	$$( m_1b^{(i)}_1,\ldots ,  m_{n_i} b^{(i)}_{n_i}) $$ is a basis of $\Lambda  _i$. 
	By Assumption \ref{ass} we have $m_{n_i} = 1$ for all $i$.
	Put $$K_i:= \langle b^{(i)}_{n_i} \rangle \mbox{  and } K_i':=  \langle b^{(i)}_1,\ldots , b^{(i)}_{n_i-1} \rangle  . $$
	Then $\Gamma _i = K_i \oplus K_i'$ and $\Lambda _i = K_i \oplus (K_i'\cap \Lambda )$. 
	Let $$K:= \bigoplus _{i=1}^k K _i \mbox{ and } K' :=  (\bigoplus _{i=1}^k K' _i ) \cap M_X .$$
	Then $M_X = K \oplus K'$ is a direct sum of these two $R$-sublattices. 

	Let $B$ be the endomorphism of $M_X$ that
	is the identity on $K'$ and the multiplication by $d'_i$ on $K_i$ for all
$i=1,\ldots ,k$.
Then $B = (B_1,\ldots , B_k) \in C(X) $  and 
$\det(B_i) =  d'_i $ for all $i$.
	\\
	If part (b) of Assumption \ref{ass} holds then we may 
	first add a multiple of $q_1$ to $d_1'$ such that $d_1'$ is 
	prime to $q_i$ for all $i=2,\ldots ,k$. 
	With the same construction as before we then find 
	$B'=(B'_1,\ldots , B'_k)  \in C(X)$ with $\det (B'_i) \equiv  d'_i/(d'_1)^{n_i} \pmod{q_i}$ for 
	all $i=2,\ldots, k$ and $\det(B'_1) = 1$.
	Then $B:=d'_1 B'\in C(X)$  has the desired properties.

	In both cases 
 $\tilde{B} \in U(\tilde{H}) $ and
	$BA=(B_1A_1,\ldots , B_kA_k) \in \Delta $ satisfies $\det (B_iA_i) \equiv 1 \pmod{q_i} $, so $\widetilde{B_iA_i} \in \SL_{n_i}(\ZZ/q_i\ZZ ) $. 
	By Lemma \ref{b}, there are matrices $C_i \in \SL_{n_i}(\ZZ )$ with 
	$\widetilde{C_i^{-1}} = \widetilde{B_iA_i}$ for all $i$. 
	Then $C:=(C_1,\ldots ,C_k) \in U(\Delta ) $ satisfies $\tilde{B} \tilde{A} \tilde{C} = 1 $. 
\end{proof}

%\begin{remark} \label{det}
	%The proof above has shown that in the situation of Theorem \ref{main} for any 
	%$q\in \NN $ and any $(d_1,\ldots , d_k )\in \NN ^k$ such that $d_i$ are prime to $q$ 
	%there is $B=(B_1,\ldots, B_k) \in C(X) $ such that $\det(B_i) \equiv d_i \pmod{q}$ for all $i$.
%\end{remark}

\section{Paley and Peisert} \label{Paley}

This last section is dedicated to the proof that the adjacency matrices of the 
Paley and Peisert graphs satisfy Part (b) of Assumption \ref{ass}.

Let $p$ be a prime $p\equiv 3 \pmod{4}$ and $q:=p^{2t}$ be an even power of $p$.  
The Paley graph (see \cite[p. 101]{Brouwer}) and the Peisert graph \cite{Peisert} on $q$ vertices are two 
cospectral Cayley graphs on an elementary abelian group of order $q$ which are isomorphic if and only if $q=9$ (see \cite{Sin}).
Choose a primitive element $\beta \in \FF _q^{\times }$ and let $U:= \langle \beta ^4 \rangle \leq \FF _q^{\times } $ 
denote the subgroup of fourth powers in the multiplicative group $\FF _q^{\times } $ of the field with $q$ elements. 
Then $$\FF_q^{\times } = U \cup \beta U \cup \beta ^2 U \cup \beta ^3 U .$$
The Paley graph $P(q)$ and the Peisert graph $P^*(q)$ have vertex set $\FF _q$.
Two vertices $i,j\in \FF _q$ are joined in $P(q)$, if and only if $i-j\in U\cup \beta ^2 U =: S = (\FF _q^{\times } )^2 $ 
and in $P^*(q)$ is and only if $i-j \in U\cup \beta U $.
Let $A(q) $ respectively $A^*(q)$ denote the adjacency matrices of $P(q)$ respectively $P^*(q)$. 

One  main result of \cite{Sin} is that for $q=p^2$ the adjacency matrices 
$A(q)$ and $A^*(q)$ are conjugate in $\GL_{q}(\ZZ _{(\ell )} )$ for all primes $\ell $. 

Using Theorem \ref{main} this allows us to show the following result:

\begin{theorem} \label{conjugate}
	The matrices $A(p^2)$ and $A^*(p^2)$ are conjugate in $\GL_{p^2}(\ZZ )$.
\end{theorem}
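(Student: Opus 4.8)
The plan is to apply Theorem \ref{main} with $X := A(p^2)$ and $Y := A^*(p^2)$. For this I must check that $X$ and $Y$ have a common minimal polynomial with distinct integer roots, that $X$ satisfies Assumption \ref{ass}(b), and that the two matrices are conjugate over every localization; the last point is exactly the main result of \cite{Sin} quoted above.

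First I would record the spectrum. The Paley and Peisert graphs on $p^2$ vertices are strongly regular with the same parameters, hence cospectral, and for $q=p^2$ their three distinct eigenvalues are the integers $a_1=(p^2-1)/2$ of multiplicity $1$, together with $a_2=(p-1)/2$ and $a_3=-(p+1)/2$, each of multiplicity $(p^2-1)/2$ (here $p$ odd makes $a_2,a_3\in\ZZ$). Thus $A(p^2)$ and $A^*(p^2)$ are symmetric, hence diagonalizable, with common minimal polynomial $(t-a_1)(t-a_2)(t-a_3)$ and $k=3$ pairwise distinct integer roots, as Theorem \ref{main} requires. Since $a_1$ is the valency and has multiplicity one, $e_1=\tfrac{1}{p^2}J$ has $\rk(e_1)=1$, which is the first half of Assumption \ref{ass}(b).

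Next I would pin down the remaining idempotents. Solving $A=a_1e_1+a_2e_2+a_3e_3$ together with $e_2+e_3=I-e_1$ gives explicit formulas; a short computation shows the minimal denominators are $q_2=q_3=p^2$, with
\[
E_2=p^2e_2=pA+\tfrac{p(p+1)}{2}I-\tfrac{p+1}{2}J,\qquad E_3=p^2I-J-E_2 .
\]
Reducing modulo $p$ gives $2E_2\equiv 2E_3\equiv -J\pmod p$, so each of $E_2,E_3$ has $\Fp$-rank $1$. Hence every nonzero elementary divisor of $E_i$ is divisible by $p$; and since $E_i^2=q_iE_i$ forces $q_i\Lambda_i\subseteq E_i\ZZ^n$, every elementary divisor divides $q_i=p^2$. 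Therefore each nonzero elementary divisor of $E_i$ lies in $\{p,p^2\}$, apart from a single divisor equal to $1$. By Remark \ref{elt} the Smith group of $E_i$ then has exponent $p^2$ if and only if at least one elementary divisor equals $p^2$, equivalently $\Lambda_i\not\subseteq p\Gamma_i$.

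The main obstacle is precisely this last point: excluding the collapse in which all elementary divisors equal $p$. To settle it I would argue $p$-adically, studying the map induced by $p^{-1}E_i$ on $\ker(E_i\bmod p)=\{\bar v:\mathbf 1^{\top}\bar v=0\}$ and showing that its reduction modulo $p$ is not injective (so that the number $s_2$ of elementary divisors equal to $p^2$ is positive); equivalently, one exhibits a single integral $a_i$-eigenvector $v\in\Lambda_i$ with $v\notin p\Gamma_i$, using the character-sum description of the eigenvectors of the Paley graph together with $-1$ being a square in $\Fp^2$. Once Assumption \ref{ass}(b) is confirmed for $X=A(p^2)$, the proof concludes at once: \cite{Sin} supplies matrices $T_\ell\in\GL_{p^2}(\ZZ_{(\ell)})$ with $T_\ell A(p^2)T_\ell^{-1}=A^*(p^2)$ for every prime $\ell$, so $X$ and $Y$ are conjugate over all localizations, and Theorem \ref{main} yields $T\in\GL_{p^2}(\ZZ)$ with $T A(p^2)T^{-1}=A^*(p^2)$.
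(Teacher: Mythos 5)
Your overall strategy is exactly the paper's: apply Theorem \ref{main} to $X=A(p^2)$, $Y=A^*(p^2)$, quote \cite{Sin} for conjugacy over all localizations, and verify Assumption \ref{ass}(b). Your setup is also correct and matches the paper: the spectrum $(p^2-1)/2$, $(p-1)/2$, $-(p+1)/2$, the identification $E_1=J$, the formula for $E_2$ (which agrees with the paper's $E_2=sJ+pX-psI$), the fact that $q_2=q_3=p^2$, that $E_i\bmod p$ is a nonzero multiple of $J$ and hence has $\Fp$-rank $1$, and therefore that every nonzero elementary divisor of $E_i$ lies in $\{1,p,p^2\}$ with exactly one equal to $1$. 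Your reduction of Assumption \ref{ass}(b) to the statement ``at least one elementary divisor of $E_i$ equals $p^2$,'' equivalently $\Lambda_i\not\subseteq p\Gamma_i$, is also sound.

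However, the proof stops precisely where the real work begins, so there is a genuine gap. Excluding the collapse in which all divisors equal $p$ is the entire content of the paper's Lemma 4.2, and you only announce a plan for it (``I would argue $p$-adically\dots'', ``one exhibits a single integral eigenvector\dots'') without carrying it out. This step is not soft: in the paper it requires decomposing $R[\FF_q]$ into homogeneous components for the action of the squares $S$, identifying the action of $E_2,E_3$ on each $2$-dimensional component as a matrix built from Jacobi sums $\alpha_j=J(\tau^{-j},\tau^k)$, invoking the valuation formula $c(j)=\frac{1}{p-1}(s(j)+s(k)-s(j+k))$ from \cite[Theorem 3.4]{SinPaley}, and counting digits to show that exactly $(p-1)^2/4\geq 1$ of the components contribute an elementary divisor $p^2$. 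Nothing in your sketch substitutes for this arithmetic input; the phrase ``using the character-sum description of the eigenvectors together with $-1$ being a square in $\FF_{p^2}$'' does not identify which eigenvector works or why it avoids $p\Gamma_i$. Moreover, your first proposed criterion is actually incorrect as stated: non-injectivity of the map induced by $p^{-1}E_i$ on $\ker(E_i\bmod p)$ is automatic, because $E_i$ has rational rank $(p^2-1)/2<p^2$ and its whole kernel reduction lies in the kernel of that induced map; hence non-injectivity cannot certify that the number of divisors equal to $p^2$ is positive. One would instead have to compute the rank of that induced map, which again amounts to the Jacobi-sum computation.
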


To prove the theorem 
we show that  the matrix $X:=A(p^2)$ satisfies part (b) of Assumption \ref{ass}. 
Put 
$$k:=\frac{p^2-1}{2}, r:= \frac{p-1}{2}, s:= \frac{-p-1}{2} .$$ 
Then the eigenvalues of $X$ are $k,r,s$ with multiplicities $1,\frac{p^2-1}{2}, \frac{p^2-1}{2}$. 
Define
$$\begin{array}{lll}
	E_1 := &  2(X-rI)(X-sI) /k = & J \\  E_2 := &-(X-kI)(X-sI) /r = &  sJ+pX-psI \\ E_3 := & -(X-kI)(X-rI)/s =& rJ-pX+prI
\end{array} $$
where $I$ denotes the unit matrix and  $J$ the all-ones matrix.
Then elementary computations show that for $i\neq j \in \{1,2,3\}$ 
$$E_i^2 = p^2 E_i \mbox{ and } E_iE_j=0. $$  In particular $e_i:=\frac{1}{p^2}E_i$ are the primitive
idempotents in $\QQ [X]$ and $q_i = p^2$ for $i=1,2,3$. Moreover 
		 $\rk (E_1) =1 $ and hence $\rk (E_2) = \rk (E_3 ) = k $.
The next lemma shows that the $E_i$ satisfy part (b) of Assumption \ref{ass}. 
Therefore Theorem \ref{main} together with the local considerations in \cite{Sin} imply Theorem \ref{conjugate}.

\begin{lemma} 
	For $i=2,3$ the Smith group of $E_i$ is 
	$$\ZZ/\ZZ \oplus (\ZZ/p\ZZ )^{(p+1)^2/4-2} \oplus (\ZZ/p^2\ZZ )^{(p-1)^2/4} .$$
\end{lemma}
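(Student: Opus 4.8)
The plan is to compute the Smith normal form of $E_2$ over $\ZZ$ one prime at a time, and then to deduce the case $i=3$ from $i=2$ using the self-complementarity of the Paley graph.

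First I would localise. From $E_2^2 = p^2 E_2$ one gets, for every prime $\ell\neq p$, that $p^{-2}E_2$ is an idempotent in $\ZZ_{(\ell)}^{n\times n}$ of rank $k$, so $E_2$ is $\GL_n(\ZZ_{(\ell)})$-equivalent to $\diag(p^2 I_k,0)$ and its local Smith group vanishes; hence the Smith group of $E_2$ is a $p$-group. Writing $\hat L := \ZZ^n\cap \mathrm{im}_{\QQ}(E_2)$ for the saturation of the image, every $v\in\hat L$ satisfies $E_2 v = p^2 v$, so $p^2\hat L\subseteq E_2\ZZ^n$, and the torsion of the cokernel, which equals $\hat L/E_2\ZZ^n$, is killed by $p^2$. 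Thus all nontrivial elementary divisors lie in $\{p,p^2\}$, and it remains to determine the numbers $a_0,a_1,a_2$ of elementary divisors equal to $1,p,p^2$, which satisfy $a_0+a_1+a_2 = \rk(E_2)=k$.

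Next I would pin down $a_0$ and reduce $a_2$ to a rank over $\Fp$. Since $E_2 = sJ+pX-psI\equiv sJ\pmod p$ with $s$ a unit mod $p$, we have $\rk_{\Fp}(E_2\bmod p)=1$, so $a_0=1$ (this is the single $\ZZ/\ZZ$ summand). For $a_2$ I would use the following device: let $W=\ker(E_2\bmod p)=\mathbf{1}^{\perp}$ and $\tilde W=\{v\in\ZZ^n_{(p)}: \bar v\in W\}$; for $v\in\tilde W$ one has $E_2 v\in p\ZZ^n_{(p)}$, so $F:=p^{-1}E_2|_{\tilde W}$ is $\ZZ_{(p)}$-integral, and comparing Smith bases shows that $F$ has exactly $a_0+a_1$ unit and $a_2$ non-unit nonzero elementary divisors, whence $a_2 = k-\rk_{\Fp}(\bar F)$. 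A direct computation of $\bar F$, using $X-sI\equiv N\pmod p$ with $N:=\bar X+\tfrac12 I$, identifies $\mathrm{im}(\bar F)= N(W)+\Fp\mathbf{1}$; since $N\mathbf 1=0$ and $N$ is symmetric this yields $a_2 = k-\dim_{\Fp}\bigl(N(\mathbf 1^{\perp})+\Fp\mathbf 1\bigr)$, reducing everything to one $\Fp$-rank of the nilpotent matrix $N$.

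The main obstacle is the evaluation of this $\Fp$-rank for the Paley graph. Here I would pass to the group algebra $\Fp[(\FF_{p^2},+)]$, in which $X$ is convolution by the indicator of the squares and $N$ becomes multiplication by a fixed element; the $\Fp$-rank is then read off from the $p$-adic valuations of the associated Gauss/Jacobi sums via Stickelberger's congruence. For $q=p^2$ the resulting digit condition factorises and gives $\rk_{\Fp}N=\bigl(\tfrac{p+1}{2}\bigr)^2$ together with $\mathbf 1\in N(\mathbf 1^{\perp})$; this is exactly the kind of $p$-rank computation carried out in \cite{Sin}, which I would cite rather than redo. Granting it, $\dim_{\Fp}\bigl(N(\mathbf 1^\perp)+\Fp\mathbf 1\bigr)=\rk_{\Fp}N-1=(p+3)(p-1)/4$, so $a_2=k-\rk_{\Fp}N+1=(p-1)^2/4$ and $a_1=k-1-a_2=(p+1)^2/4-2$, which is the asserted Smith group of $E_2$. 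Finally, for $i=3$ I would invoke that $P(p^2)$ is self-complementary: a permutation matrix $Q$ given by vertex multiplication by a non-square satisfies $QXQ^{-1}=J-I-X$, which interchanges the $r$- and $s$-eigenspaces and hence conjugates $E_2$ to $E_3$; as $Q\in\GL_n(\ZZ)$, the matrices $E_2$ and $E_3$ have the same Smith normal form, so the formula for $E_3$ follows from that for $E_2$.
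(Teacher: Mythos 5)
Your reduction steps are all correct, and your organization is genuinely different from the paper's. The paper works over $R=\ZZ_p[\zeta_{q-1}]$, decomposes $R[\FF_q]$ under the action of $S=(\FF_q^{\times})^2$ into one $3$-dimensional block $M_0$ and $k-1$ two-dimensional blocks $M_j$, writes the action of $E_2,E_3$ on each $M_j$ via Jacobi sums $\alpha_j$, and counts $p$-adic valuations through Stickelberger's congruence (\cite[Theorem 3.4]{SinPaley}), handling $i=2$ and $i=3$ by the same block computation. You instead (a) show directly that only $1,p,p^2$ occur as elementary divisors, (b) convert the count of $p^2$'s into a single $\Fp$-rank via the integral map $F=p^{-1}E_2|_{\tilde W}$ --- this device is sound: in a Smith basis $F$ has exactly $a_0+a_1$ unit elementary divisors, so $a_2=k-\rk_{\Fp}(\bar F)$ and $\mathrm{im}(\bar F)=N(\mathbf{1}^{\perp})+\Fp\mathbf{1}$ as you say --- and (c) get $i=3$ for free from self-complementarity; indeed $QE_2Q^{-1}=sJ+p(J-I-X)-psI=rJ-pX+prI=E_3$ since $s+p=r$ and $s+1=-r$, a shortcut the paper does not use. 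What this buys is that the number-theoretic input is compressed into the single classical statement $\rk_{\Fp}(\bar X+\tfrac12 I)=(\tfrac{p+1}{2})^2$, the Brouwer--van Eijl $p$-rank theorem for Paley graphs (also recoverable from \cite{SinPaley}), instead of a full elementary-divisor analysis of every block.

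There is, however, one genuine (though fixable) gap: besides the rank of $N$ you need the assertion $\mathbf{1}\in N(\mathbf{1}^{\perp})$, and this is not a quotable statement --- certainly not from \cite{Sin}, which concerns Peisert graphs --- nor does it follow formally from the rank: a priori $\dim_{\Fp}\bigl(N(\mathbf{1}^{\perp})+\Fp\mathbf{1}\bigr)$ could be any of $\rk N-1$, $\rk N$, $\rk N+1$ (according to whether $\mathbf{1}$ lies in $N(\mathbf{1}^{\perp})$, in $\mathrm{im}(N)\setminus N(\mathbf{1}^{\perp})$, or outside $\mathrm{im}(N)$), and each choice gives a different $a_2$. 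So this must be proved. A short character-sum argument does it: let $b:=\sum_{x\in S}[x]-\sum_{x\in\FF_q^{\times}\setminus S}[x]\in\mathbf{1}^{\perp}$; since $-1\in S$, the standard quadratic-character computation gives $Xb=(k+\tfrac12)[0]-\tfrac12 b-\tfrac12\mathbf{1}$, and as $k+\tfrac12=\tfrac{p^2}{2}\equiv 0\pmod p$ we get $Nb\equiv -\tfrac12\mathbf{1}\pmod p$, hence $\mathbf{1}=N(-2b)\in N(\mathbf{1}^{\perp})$. (This is exactly the paper's computation on the block $M_0=\langle\mathbf{1},[0],b\rangle$.) With this supplied, and with the rank citation redirected to Brouwer--van Eijl or \cite{SinPaley}, your proof is complete. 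Note finally that the method you sketch for obtaining the rank (Teichm\"uller characters, Jacobi sums, Stickelberger) is precisely the paper's method, so the two proofs ultimately rest on the same hard input; the advantage of your version is that this input already exists in the literature as a citable theorem.
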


\begin{proof}
	We use the methods from \cite{SinPaley}. 
	We first note that the exponent of the Smith group of $E_i$ divides $p^2$ by Remark \ref{elt}. 
	In particular we may pass to the $p$-adics. 
	Let $R:=\ZZ _p[\zeta _{q-1}]$ denote the ring of integers in the unramified extension of $\QQ _{p}$ of degree 2. 
	Then the adjacency matrix $X$ of $P(p^2)$ is seen as an endomorphism of $R[\FF _q] $. 
	Recall that 
	$S= \langle \beta ^2 \rangle = (\FF _q^{\times })^2 .$ 
	Then $S$ acts on $R[\FF _q] $ permuting the basis vectors
	$([x],s) \mapsto [xs] $ 
	for all $x\in \FF _q$, $s\in S$. 
	As $|S| = \frac{q-1}{2} \in R^{\times }$ is invertible in $R$ the
	$RS$-module  $R[\FF _q]$ is semisimple. 
        Let $\tau  : \FF _q^{\times } \to R^{\times } $ denote the group monomorphism known as the Teichm\"uller character.
	The matrices $J: [x] \mapsto \sum_{y\in \FF _q} [y] $ and 
	$X : [x] \mapsto \sum _{s\in S} [x+s] $ commute with the action of $S$ and hence act on the homogeneous 
	components 
	$$M_0 := \langle  {\bf 1} := \sum_{y\in \FF _q} [y], [0], b_k := \sum _{s\in S}[s] - \sum _{x\in \FF_q^{\times }\setminus S} [x] \rangle $$
	and 
	$$M_j := \langle b_j, b_{j+k} \rangle , \ j=1,\ldots , k-1 ,\mbox{ where }b_j := \sum _{x\in \FF _q^{\times }} \tau ^j(x^{-1}) [x] $$ (see \cite[Section 3]{SinPaley}).
	For the action of $E_i$ on  $M_0$ we compute 
	$$  
E_1^{(0)}  := 	 \left(\begin{array}{ccc} p^2 & 0 & 0 \\ 1 & 0 & 0 \\ 0 & 0 & 0 \end{array}\right) ,\ 
	E_2 ^{(0) } := \frac{1}{2} \left(\begin{array}{ccc} 
	0 & 0 & 0 \\ \mbox{-}1 & p^2 & p \\ \mbox{-}p & p^3 & p^2 \end{array} \right) ,\  
			\ E_3^{(0)} = 
			 \frac{1}{2} \left(\begin{array}{ccc} 
			 0 & 0 & 0 \\ \mbox{-}1 & p^2 & \mbox{-}p \\ p & \mbox{-}p^3 & p^2 \end{array} \right) .$$
In particular the rank of $E_i^{(0)}$ is 1, contributing a 1 to the abelian invariants
of $E_i$ for $i=1,2,3$. 
Clearly $J=E_1$ acts on $M_j$ as $0$ for $j\geq 1$.
	In the notation of \cite{SinPaley} let $j>0$ and $\alpha_j:= J(\tau ^{-j},\tau ^k)$ denote the Jacobi sum. 
	Then \cite[Lemma 3.1]{SinPaley} shows that $X$ acts on $M_j$ as right multiplication by 
$ X_j := \frac{1}{2} \left( \begin{array}{cc} -1 & \alpha _j \\ \alpha _{j+k} & -1 \end{array} \right) $ so $E_2$ and $E_3$ 
	by right multiplication with $p(X_j-s) $ respectively $-p(X_j-r)$ in matrices 
$$  E_2^{(j)} :=  \frac{p}{2} \left( \begin{array}{cc} p & \alpha _j \\ \alpha _{j+k} & p \end{array} \right) \mbox{ and } 
E_3^{(j)} := \frac{p}{2} \left( \begin{array}{cc} p & -\alpha _j \\ -\alpha _{j+k} & p \end{array} \right)  .$$
As the rank of $E_2$ and $E_3$ is $(p^2-1)/2 = 1+(k-1)$ and all $E_i^{(j)}$ are non-zero 
for $i=2,3$, $j=1,\ldots , k-1$ we obtain
that all these $E_i^{(j)}$ have rank 1, in particular $\alpha _j \alpha _{j+k} = p^2$,
for all $j=1,\ldots , k-1$. 
	Now \cite[Theorem 3.4]{SinPaley} says that 
	the $p$-adic valuation of $\alpha _i $ is 
	$$c(j) = \frac{1}{p-1} (s(j) + s(k) - s(j+k) ) $$ 
	where $s(j) = a+b$ if $j\equiv ap+b \pmod{p^2}$ with $0\leq a,b\leq p-1$.
	As $k=\frac{p-1}{2} p + \frac{p-1}{2} $ we have $s(k) = p-1$. 
	Moreover for 
	$$1\leq j = ap+b < \frac{p^2-1}{2} = k $$ we have 
	$a\leq (p-1)/2$ and $a\leq (p-3)/2$ if $b\geq (p-1)/2$. 
	Computing the digits of $j+k$ for these $j$ we find 
	\begin{itemize}
		\item[(0)]
			$s(j+k) = s(j) + s(k) $ if $0\leq a,b \leq \frac{p-1}{2}$, $(a,b) \not\in \{ (0,0) , (\frac{p-1}{2},\frac{p-1}{2} ) \} $
		\item[(1)] 
			$s(j+k) = s(j)+s(k) - (p-1) $ 
			if $\frac{p+1}{2} \leq b \leq p-1 $ and $0\leq a \leq \frac{p-3}{2} $.
	\end{itemize}
	So there are $(\frac{p+1}{2})^2-2$ such $1\leq j < k $ with 
	$c(j) = 0$ and $(\frac{p-1}{2} )^2$ such $j$ with $c(j) = 1$. 
	For the $j$ with $c(j) = 1 $ (and hence also $c(j+k) = 1$ as $\alpha_j\alpha_{j+k} = p^2$) 
	all entries of $E_i^{(j)}$ are divisible by $p^2$ so these  $j$ contribute 
	a value $p^2$ to the abelian invariants of both, $E_2$ and $E_3$. 
	If $c(j) = 0$ there is one entry of $E_i^{(j)}$ having valuation 1, so these $j$ contribute 
	a value $p$ to the abelian invariants of $E_2$ and $E_3$. 
\end{proof}

\end{document}